\documentclass[10pt,reqno]{amsart}
\usepackage{amsmath,amssymb,amsthm}
\usepackage{microtype} 
\usepackage{cite}
\usepackage{enumitem}

\usepackage{hyperref}
\usepackage{mathtools}
\mathtoolsset{showonlyrefs}

\allowdisplaybreaks

\newtheorem{theorem}{Theorem}[section]

\newtheorem{lemma}[theorem]{Lemma}

\theoremstyle{definition}
\newtheorem{remark}[theorem]{Remark}

\newcommand{\C}{\mathbb{C}}
\newcommand{\D}{\mathbb{D}}
\newcommand{\De}{\mathbb{D}_{\mathrm{e}}}

\renewcommand{\P}{\mathbb{CP}}
\newcommand{\T}{\mathbb{T}}
\newcommand{\Chat}{\widehat{\C}}
\newcommand{\caplog}{\operatorname{cap}}
\newcommand{\supp}{\operatorname{supp}}
\newcommand{\Span}{\operatorname{span}}
\newcommand{\ord}{\operatorname{ord}}

\title[Fermat's Last Theorem in star-invariant subspaces]{Fermat's Last Theorem in star-invariant subspaces:  capacity-zero boundary spectrum}
\author[S.R.~Garcia]{Stephan Ramon Garcia}
\address{Department of Mathematics and Statistics, Pomona College, 610 N. College Ave., Claremont, CA 91711, USA}
\email{stephan.garcia@pomona.edu}
\urladdr{\url{https://stephangarcia.sites.pomona.edu/}}

\thanks{Partially supported by NSF grant DMS-2452084.}

\subjclass[2020]{Primary 30H10; Secondary 47A15, 30F10, 30C85.}
\keywords{Fermat's Last Theorem, model space, star-invariant subspace, Hardy space, inner function, Mason--Stothers theorem, Blaschke product, singular inner function, singular measure, logarithmic capacity, pseudocontinuation}

\begin{document}

\maketitle

\begin{abstract}
We prove that the Fermat equation $f^n+g^n=h^n$ with exponent $n\geq 4$ has no nontrivial solutions $f,g,h$ in the star-invariant subspace $K_\theta^p$, in which $1\leq p\leq\infty$, whenever the boundary spectrum of the inner function $\theta$ has logarithmic capacity zero.  This gives a positive answer, for this class of inner functions and exponents, to a problem posed by Dyakonov.
\end{abstract}

\section{Introduction}\label{Section:Introduction}
This paper concerns an intriguing problem proposed by Dyakonov over a decade ago \cite{Dyakonov}.
Despite its simple statement and evident appeal, it has not been well studied.
To be more specific, he asked whether Fermat's Last Theorem (FLT) holds for star-invariant subspaces of Hardy spaces.

Before stating Dyakonov's problem, we require some definitions and notation.
Let $H^p$ denote the Hardy space on the open unit disk $\D$, in which $1\leq p\leq\infty$,
and let $\theta$ denote an inner function \cite{Duren, Koosis, Garnett}.  The associated \emph{star-invariant subspace}
is the subspace $K_{\theta}^p = H^p \cap \theta \overline{zH^p}$ of $H^p$, in which
we have identified $H^p$ functions with their almost-everywhere defined nontangential boundary functions
on the unit circle $\T$.  These are invariant subspaces for the backward shift on $H^p$ \cite{CimaRoss, DSS, CBSTK}.
The spaces $K_{\theta}^2$ are of particular importance; they are the so-called \emph{model spaces} and play a central role
in the model theory for contractions \cite{IMSTO, MS, MSAS, Nikolski}.

A triple $(f,g,h)$ of functions in $K_{\theta}^p$ is \emph{trivial} if $\dim\Span\{f,g,h\}\leq1$ and it is \emph{nontrivial} otherwise.
In 2013, Dyakonov posed the following problem in \cite[Prob.~2]{Dyakonov}:
\begin{quote}\em
Prove or disprove ``Fermat's last theorem'' for $K_{\theta}^p$: whenever $\theta$ is inner, $p \in [1,\infty]$
and $n \in \{3,4,5,\ldots\}$, no nontrivial triple of functions $f,g,h \in K_{\theta}^p$ satisfies $f^n+g^n=h^n$.
\end{quote}
Since $K_{z^3}^p = \Span\{1,z,z^2\}$ for $1 \leq p \leq \infty$ and 
$(1-z^2)^2+(2z)^2=(1+z^2)^2$, the restriction $n \geq 3$ is necessary to make things interesting.

Although his article \cite{Dyakonov} appeared in 2014, Dyakonov posed his question at least as early as
the conference  ``Invariant subspaces of the shift operator'' at the Centre de Recherches Math\'ematiques in August 2013. During an open problem session, he said something to the effect:  ``the next problem is Fermat's Last Theorem,'' followed by a long pause which was met with a few chuckles.  Most of the audience thought it was a joke.  When Dyakonov added ``for model spaces,'' a quiet fell upon the room, for the audience instantly understood that it was a nontrivial problem of a highly novel nature. He explained how the Mason--Stothers theorem (the polynomial $abc$-theorem) handles finite-dimensional model spaces. This theorem states that if $a,b,c$ are relatively prime polynomials, not all constant, and $a+b=c$, then $\max\{\deg a, \deg b, \deg c\}$ is less than the number of distinct zeros of $abc$ in $\C$ \cite{Mason, Stothers}.

To our knowledge, there has been no work on the Dyakonov's problem beyond his original paper. This is not due to lack of interest, but rather because of the difficulty of the problem.  One cannot expect anything related to Fermat's Last Theorem to be easy.  The problem was advertised again in 2016 \cite[p.~124]{IMSTO}.

The main result of this paper proves FLT for exponents $n \geq 4$ and model spaces for which the spectrum of the inner function meets the unit circle in a small set, in the sense of logarithmic capacity.
Since this article is aimed at the operator-related function theorist,
we assume familiarity with inner functions \cite{Duren, Koosis, Garnett}, star-invariant subspaces \cite{CimaRoss,IMSTO,Nikolski}, logarithmic capacity \cite{Ransford, SaffSurvey, SaffTotik}, and Riemann surfaces \cite{Donaldson, Farkas, Miranda, Gamelin, Jost}.

Let $\theta$ be an inner function and write 
$\theta=\alpha B S$, in which $\alpha \in \T$, $B$ is a Blaschke product with zero sequence $Z(B)$,
and $S$ is a singular inner function with finite positive singular measure $\mu$ on $\T$.
The \emph{spectrum} of $\theta$ is $\sigma(\theta) = Z(B)^- \cup \supp \mu$;
that is, the union of the closure of its zero set and the support of $\mu$ \cite[Def.~7.3.2]{IMSTO}.
The term ``spectrum'' arises because it is the spectrum of the associated compressed shift operator \cite[Thm.~9.6.1]{IMSTO}.  However, our interest in 
the spectrum of $\theta$ arises from its connection with holomorphic continuation: $\sigma(\theta)\cap \T$
is the set of points of $\T$ across which $\theta$ cannot be holomorphically continued
\cite[Theorem 7.3.4.i \& Prop.~7.3.8]{IMSTO}.

Our main result appears to be the first progress on Dyakonov's problem since 2013, when he first proposed it
and handled the finite-dimensional case \cite{Dyakonov}, in which the inner function is a finite Blaschke product \cite{FBP, FBPAS}.

\begin{theorem}\label{Theorem:Main}
Let $1 \leq p \leq \infty$, let $\theta$ be an inner function such that $\caplog (\sigma(\theta)\cap\T)=0$.  
If $n \geq 4$ and $f,g,h \in K_{\theta}^p$ satisfy $f^n+g^n=h^n$, then $\dim\Span\{f,g,h\} \leq 1$.
\end{theorem}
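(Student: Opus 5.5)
The plan is to reduce to a statement about meromorphic maps from $\Chat\setminus E$, where $E=\sigma(\theta)\cap\T$, into the Fermat curve
\[
X_n=\{[x:y:w]\in\P^2 : x^n+y^n=w^n\}.
\]
For $n\geq 4$ this curve is a compact Riemann surface of genus $(n-1)(n-2)/2\geq 3$. First dispose of degenerate cases. If $h\equiv 0$, then $f^n=(-g)^n$, so $f=\omega g$ for some $n$th root $\omega$ of $-1$, and $\dim\Span\{f,g,h\}\leq 1$. So assume $h\not\equiv 0$ and set $u=f/h$, $v=g/h$. These are meromorphic on $\D$ and satisfy $u^n+v^n=1$. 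It suffices to show that $u$ and $v$ are constant, because then $f=ah$ and $g=bh$.

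\textbf{Step 1: extension to $\Chat\setminus E$.} Every $k\in K_\theta^p$ has the boundary form $k=\theta\,\overline{z K}$ a.e.\ on $\T$ for some $K\in H^p$. Writing $f=\theta\overline{zF}$, $g=\theta\overline{zG}$, $h=\theta\overline{zH}$, the factor $\theta\bar z$ cancels in the quotients, giving
\[
u=\overline{F}/\overline{H},\qquad v=\overline{G}/\overline{H}\quad\text{a.e.\ on }\T.
\]
Hence $u$ has the pseudocontinuation $\tilde u(\zeta)=\overline{F(1/\bar\zeta)}/\overline{H(1/\bar\zeta)}$ to $\De$, which is of bounded type there, and similarly for $v$. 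Across any arc of $\T\setminus E$, the function $\theta$ continues analytically with $|\theta|=1$, so $f,g,h$ continue analytically as well. The two sides of $u$ then glue, via the reflection identity, into a single meromorphic function on $\Chat\setminus E$. The same holds for $v$, and $u^n+v^n=1$ persists by the identity theorem. Thus $\Phi=[u:v:1]$ is a holomorphic map $\Chat\setminus E\to X_n$.

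\textbf{Step 2: removing $E$ (the main obstacle).} I expect the crux to be showing that $\Phi$ extends holomorphically to all of $\Chat$. Polar sets are removable for bounded holomorphic functions and for subharmonic functions bounded above, but not automatically for meromorphic functions. So the bounded-type information from Step 1 must be used. I would first pull back the holomorphic differential $\omega=dx/y^{n-1}$ of $X_n$, obtaining
\[
\Phi^*\omega=\varphi(z)\,dz,\qquad \varphi=u'/v^{n-1},
\]
holomorphic on $\Chat\setminus E$. I would then try to show that $\log|\varphi|$ (or $\log^+|u|$, $\log^+|v|$) is majorized near $E$ by a function that is subharmonic across $E$. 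The natural candidates come from Nevanlinna-type bounds: $\log^+|u|\le \log^+|f|+\log^+|1/h|$ on $\D$, controlled by Poisson integrals of $H^p$ boundary data, and the reflected analogue on $\De$. Since $\caplog E=0$, $E$ is polar, so such majorants extend subharmonically across $E$. A Schwarz-reflection or Phragm\'en--Lindel\"of argument would then give that $u$ and $v$ have at worst poles, or no singularities, on $E$.

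If this direct route stalls, my fallback would be an Ahlfors--Schwarz argument. I would pull back the hyperbolic metric of $X_n$ (available since genus $\geq 2$) and use that its log-density is subharmonic and bounded above on $\Chat\setminus E$. Such a function extends subharmonically across the polar set $E$, and the curvature identity would then force degeneracy, that is, $\Phi$ constant.

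\textbf{Step 3: conclusion.} Once $u$ and $v$ are meromorphic on $\Chat$, they are rational, and $\Phi:\Chat\to X_n$ is holomorphic. By Riemann--Hurwitz, or simply because $\Phi^*\omega$ would be a nonzero holomorphic differential on $\Chat$ when $\Phi$ is nonconstant and no such differential exists, $\Phi$ must be constant since $X_n$ has genus $\geq 3>0$. Hence $u\equiv a$ and $v\equiv b$, so $f=ah$ and $g=bh$, and $\dim\Span\{f,g,h\}\leq 1$.
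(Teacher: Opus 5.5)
Steps 1 and 3 are sound and match the paper. Step 1 glues the interior data on $\D$ with the reflected data on $\De$ across arcs of $\T\setminus E$. Step 3 uses that a holomorphic map from $\Chat$ into a curve of positive genus is constant. The genuine gap is Step 2, which carries the entire difficulty, and neither route you sketch closes it. The missing ingredient is the Nishino--Suzuki--J\"arvi extension theorem (Lemma~\ref{Lemma:Extension}): if $E\subset\Omega$ is compact with $\caplog E=0$ and $X$ is a compact Riemann surface of genus at least two, then every holomorphic map $\Omega\setminus E\to X$ extends holomorphically to $\Omega$. Applied to your $\Phi$, it gives the extension to $\Chat$, and your Step 3 then finishes.

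Your direct route cannot work from bounded-type information alone. Take $\theta(z)=\exp(-\frac{1+z}{1-z})$, so $E=\{1\}$. Then $\theta$ is holomorphic on $\Chat\setminus\{1\}$ and of bounded type on both sides of $\T$, yet has an essential singularity at $1$. Likewise, with $k_\lambda(z)=(1-\overline{\theta(\lambda)}\theta(z))/(1-\bar\lambda z)\in K_\theta^p$ and $\theta(\lambda)\neq\theta(0)$, the quotient $k_\lambda/k_0$ has poles at the points $1-i/(\pi k)\in\De$, which accumulate at $1$. So no Poisson-majorant or Phragm\'en--Lindel\"of argument can show that $u,v$ have at worst poles on $E$; the genus hypothesis must enter. (Note also that the Poisson integral of $\log|h|$ bounds $\log|h|$ from above, which is the wrong direction for $\log^+|1/h|$.) For the one object that does use the Fermat structure, $\varphi=u'/v^{n-1}$, you give no estimate.

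Your Ahlfors--Schwarz fallback asserts without proof that $\log\lambda$ is bounded above near $E$, where $\Phi^*\rho_X=\lambda|dz|$ and $\rho_X$ is the hyperbolic metric of $X_n$. Ahlfors--Schwarz only gives $\lambda\le\rho_{\Chat\setminus E}$, and that density blows up at $E$, like $1/(r\log(1/r))$ at an isolated point. Such a bound is in fact as strong as the conclusion. Since $|\omega|\le C\rho_X$ on the compact curve $X_n$, it would make $\varphi$ bounded near $E$, hence removable across the polar set $E$. Then $\varphi\,dz$ would be a holomorphic differential on $\Chat$, hence zero, forcing $u$ constant. Supplying this control near a polar set is exactly what the extension theorem does, and your proposal does not provide it.
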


For example, the theorem applies when $\theta$ is an atomic inner function or a Blaschke product whose zeros accumulate only on a countable set.

Dyakonov remarks in \cite{Dyakonov} 
that ``[I]t seems that a suitable $K_{\theta}^p$ version of the Mason--Stothers theorem, if
available, might give us a clue to Problem 2.''  Although this may yet be the case, we take a different approach here.
Instead of a zero-counting approach or an $abc$-analogue,
we employ pseudocontinuations, properties of the projective Fermat curve, Riemann surfaces, the Uniformization Theorem, and a hyperbolic extension theorem
of Nishino--Suzuki--J\"arvi \cite{Nishino, Suzuki, Jarvi}.
Thus, our results combine operator-related function theory going back to Douglas--Shapiro--Shields \cite{DSS}
with techniques more familiar in number theory and algebraic geometry.

\begin{remark}
Gross proved that $f^n+g^n=1$ has no nonconstant meromorphic solutions for $n \geq 4$, whereas for $n=3$ 
solutions can be constructed using Weierstrass' $\wp$ function \cite{Gross1966,GrossFunctional1966, Gross1968}. Baker \cite{Baker1966} later described all meromorphic solutions of the cubic equation in terms of these elliptic solutions and composition with entire functions. These results do not apply directly to our problem since functions in star-invariant subspaces need not extend meromorphically to $\C$ if $\sigma(\theta) \cap \T \neq \varnothing$.  We are able to overcome this obstruction by working projectively.
\end{remark}

\begin{remark}
The author, who coauthored a book on model spaces \cite{IMSTO} and wrote his first paper on star-invariant subspaces \cite{CBSTK}, 
has published many papers in number theory, including a few tangentially related to
Fermat's Last Theorem \cite{SAHS, K6M}.  Consequently, Dyakonov's version of the problem had long fascinated him and the author cannot suppress his natural enthusiasm at finally having connected his two seemingly disparate interests.  However, he must acknowledge fruitful discussions with ChatGPT, which located crucial references which finally permitted the author to make some progress on this problem after many years of unsuccessful contemplation. After the initial draft was complete, Claude was then used to proofread and criticize this note.  Any remaining mistakes are, of course, the fault of the author.
\end{remark}

This brief note is organized as follows.
Section \ref{Section:Preliminaries} contains preliminary material about the projective Fermat curve and hyperbolic rigidity
that may be unfamiliar to certain readers.
The proof of Theorem \ref{Theorem:Main} appears in Section \ref{Section:Proof}.

\section{Preliminaries}\label{Section:Preliminaries}

Two-dimensional complex projective space is $\P^2 = (\C^3 \setminus \{\mathbf{0}\}) / \sim$, in which $\sim$ identifies nonzero scalar multiples \cite[Ch.~4]{Fulton}. The equivalence class of $(z_0,z_1,z_2)$ is denoted $[z_0 : z_1 : z_2]$.  One can show that $\P^2$ is a compact, complex manifold of complex dimension $2$.
Operator-related function theorists will be familiar with the analogous projective complex line $\P^1$: it is the Riemann sphere.

The universal covering surface of a compact, connected Riemann surface is, up to conformal equivalence, 
$\Chat = \C \cup \{\infty\}$ if $g(X) = 0$, $\C$ if $g(X) = 1$, or $\D$ if $g(X) \geq 2$, in which $g(X)$ denotes the genus of $X$ \cite[Ch.~10, Thm.~12]{Donaldson}, \cite[p.~443]{Gamelin}, \cite[Thm.~4.4.1 \& Cor.~4.4.1]{Jost}.  Loosely put, $g(X)$ counts the numbers of handles in $X$.  For example,
the Riemann sphere has genus $0$ and the torus has genus $1$.

The \emph{projective Fermat curve}
\begin{equation}\label{eq:FermatCurve}
X_n = \big\{ [x,y,z] \in \P^2 : x^n+y^n=z^n\big\}
\end{equation}
is a smooth, compact, connected Riemann surface with genus
\begin{equation}\label{eq:FermatGenus}
g(X_n) = \frac{1}{2}(n-1)(n-2);
\end{equation}
see \cite[Ch.~3, Prop.~3 \& Ch.~7, Prop.~20]{Donaldson}.
Thus, the universal covering surface of $X_n$ is $\D$ for $n \geq 4$.  Note that $g(X_3) = 1$, so $X_3$ has $\C$
as its universal covering surface.  This distinction prevents Lemmas \ref{Lemma:Extension} and \ref{Lemma:HyperbolicRigidity} below from applying when $n=3$.

The defining polynomial $x^n+y^n-z^n$ is homogeneous of degree $n$, so the cancellation of a common local factor in homogeneous coordinates preserves the Fermat relation. Thus, common zeros and poles are removable \cite[Thm.~1.8]{Forster}.

\begin{lemma}\label{Lemma:Removable}
Let $\Omega$ be a Riemann surface, let $n \geq 2$, and let
$f,g,h$ be meromorphic functions on $\Omega$, not all identically zero, such that
$f^n+g^n=h^n$. Then $\Phi=[f:g:h]$ determines a holomorphic map
$\Phi:\Omega \to X_n$; that is, any common zeros of $f,g,h$ and any poles of $f,g,h$ are projectively removable.
\end{lemma}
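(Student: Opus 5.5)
The proof is local: it suffices to show that near each point $a\in\Omega$ the map $[f:g:h]$ is represented by a triple of holomorphic functions with no common zero. The relation $f^n+g^n=h^n$ then places the image in $X_n$, and holomorphy of the map follows from the definition of the complex structure on $\P^2$.

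First I record a global fact. Since $\Omega$ is connected and $f,g,h$ are not all identically zero, some function in the triple, say $f$, is not identically zero. Its zeros and poles are then isolated. On $\Omega$ minus the discrete set of zeros and poles of $f,g,h$, the point $[f(z):g(z):h(z)]$ is well defined in $\P^2$, and it lies in $X_n$ because the relation holds pointwise.

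Now fix $a\in\Omega$ and a local coordinate $w$ centered at $a$. For each function in the triple that is not identically zero, its order $\ord_a$ at $a$ is a finite integer; for the identically zero ones, set the order to $+\infty$. Let $m=\min\{\ord_a f,\ord_a g,\ord_a h\}$, which is finite. Define $\tilde f=w^{-m}f$, $\tilde g=w^{-m}g$, $\tilde h=w^{-m}h$. These are holomorphic near $a$, since each order is at least $m$. They do not vanish simultaneously at $a$, since at least one has order exactly $m$.

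Multiplying by $w^{-m}$ does not change the projective point where $w\neq0$. Also $\tilde f^n+\tilde g^n=w^{-mn}(f^n+g^n)=w^{-mn}h^n=\tilde h^n$, because the polynomial is homogeneous of degree $n$. This is the cancellation observation made just before the lemma.

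Hence $z\mapsto[\tilde f(z):\tilde g(z):\tilde h(z)]$ is defined on a whole neighborhood of $a$, and it agrees with $\Phi$ off $a$. To see that it is holomorphic into $\P^2$, pick a coordinate that is nonzero at $a$, say $\tilde f(a)\neq0$. In the affine chart $\{z_0\neq0\}$ the map is $(\tilde g/\tilde f,\tilde h/\tilde f)$, which is holomorphic near $a$. The image lies in $X_n$ by the relation above.

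Since $X_n$ is a closed complex submanifold of $\P^2$ (a smooth curve), a holomorphic map into $\P^2$ with image in $X_n$ is holomorphic as a map into $X_n$. The extensions defined near different points agree on overlaps, because they agree off a discrete set and are continuous. Therefore they glue to a global holomorphic $\Phi:\Omega\to X_n$.

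There is no real obstacle here. The only point needing care is that the minimal order $m$ is finite, which uses connectedness of $\Omega$ together with the fact that not all of $f,g,h$ are identically zero. Alternatively, one can cite the Riemann removable singularity theorem for maps into the compact surface $X_n$ \cite[Thm.~1.8]{Forster}. The explicit rescaling is more direct, however, and it shows that poles are handled in exactly the same way, as the case of negative orders.
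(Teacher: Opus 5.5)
Your proof is correct and is essentially the paper's argument: divide by $w^{m}$ with $m$ the minimal order at $a$, use homogeneity of $x^n+y^n-z^n$ to keep the rescaled triple on $X_n$, and glue the local extensions, which agree off a discrete set. One harmless slip: in your preliminary paragraph, if one of $f,g,h$ vanishes identically then its zero set is not discrete, so there you should remove only the poles and the common zeros; the local rescaling argument, which is what actually proves the lemma, is unaffected.
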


\begin{proof}
Fix $a\in\Omega$ and fix a local coordinate $z$ centered at $a$. Let
\begin{equation*}
m = \min\{ \ord_a f, \ord_a g, \ord_a h\},
\end{equation*}
in which a pole has negative order and the zero function has order $\infty$.  
Since $f,g,h$ are not all identically zero, $m$ is finite.  On a neighborhood of $a$, we have
\begin{equation}
F(z)=z^{-m}f(z), \qquad
G(z)=z^{-m}g(z), \quad \text{and} \quad
H(z)=z^{-m}h(z),
\end{equation}
in which $F$, $G$, and $H$ are holomorphic near $a$ and at least one of
$F(a)$, $G(a)$, and $H(a)$ is nonzero.  
Thus, $[F:G:H]$ is a holomorphic map into $\P^2$ on a neighborhood of $a$.
Since the Fermat equation is homogeneous of degree $n$, we get
\begin{equation}
F^n+G^n-H^n = z^{-mn}(f^n+g^n-h^n) = 0
\end{equation}
on that neighborhood, so $[F:G:H]$ maps into $X_n$.  Away from $a$, we have
$[F:G:H] = [f:g:h]$, which yields a holomorphic extension across $a$.
Since $a \in \Omega$ was arbitrary and because the local holomorphic extensions agree on overlaps, they
combine to give a holomorphic map $\Phi:\Omega\to X_n$.
\end{proof}

The next lemma is due to Nishino \cite[Thm.~II, p.110-111]{Nishino}. 
Suzuki later gave a simpler proof \cite{Suzuki}.  J\"arvi then placed the result in the context of Picard-type extension theorems for Riemann surfaces\cite[Cor.~1]{Jarvi}.

\begin{lemma}\label{Lemma:Extension}
Let $\Omega\subset\C$ be a domain, let $E\subset \Omega$ be compact with
$\caplog E=0$, and let $X$ be a compact Riemann surface of genus at least two.  Every holomorphic map
$\Phi:\Omega\setminus E\to  X$ extends holomorphically to $\Omega$.
\end{lemma}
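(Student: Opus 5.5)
The approach I would take is to lift $\Phi$ to the universal cover of $X$ and then use the removability of polar sets for bounded holomorphic functions. Since $g(X)\geq 2$, the universal covering of $X$ is a holomorphic map $\pi:\D\to X$, and $X$ inherits a complete hyperbolic metric of curvature $-1$ with a positive injectivity radius $r_X$, because $X$ is compact. Extension is a local question, and $\Phi$ is already holomorphic off $E$. So I would fix $a\in E$ and work on a small disk $\Delta$ centered at $a$ with $\overline{\Delta}\subset\Omega$. The goal is to lift $\Phi|_{\Delta\setminus E}$ to a single-valued holomorphic map $\widetilde\Phi:\Delta\setminus E\to\D$ with $\pi\circ\widetilde\Phi=\Phi$.

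\textbf{Step 1: monodromy.} The set $\Delta\setminus E$ is connected but not simply connected, so a lift exists on simply connected pieces and is single-valued precisely when the induced homomorphism $\Phi_*:\pi_1(\Delta\setminus E)\to\pi_1(X)$ is trivial. I would prove this using Schwarz--Pick: holomorphic maps do not increase hyperbolic length, so the image under $\Phi$ of any loop has $X$-length at most the loop's hyperbolic length in $\Delta\setminus E$. A closed curve in $X$ of length less than $r_X$ is null-homotopic. It therefore suffices to show that $\pi_1(\Delta\setminus E)$ is generated by loops of arbitrarily small hyperbolic length, after shrinking $\Delta$ and replacing it by nearby admissible circles or curves. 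Here I would use $\caplog E=0$: compact polar sets are totally disconnected, and for almost every $r$ the circle $|z-a|=r$ misses $E$. The aim is to produce, around each small clump of $E$, separating annuli in the complement of $E$ with huge modulus, so that their core curves are hyperbolically short. This mimics the situation of a puncture, where curves around the puncture are arbitrarily short.

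\textbf{Step 2: extension of the lift.} Once the lift $\widetilde\Phi$ is single-valued, it is a bounded holomorphic function on $\Delta\setminus E$ with values in $\D$. Compact sets of capacity zero are removable for bounded holomorphic functions, so $\widetilde\Phi$ extends holomorphically to $\Delta$ with $|\widetilde\Phi|\leq 1$. By the maximum principle the extension either takes values in the open disk $\D$ or is a unimodular constant. The latter is impossible, since a constant of modulus one would contradict $|\widetilde\Phi|<1$ on the nonempty set $\Delta\setminus E$. Hence $\pi\circ\widetilde\Phi$ is a holomorphic map $\Delta\to X$ agreeing with $\Phi$ off $E$. Finitely many such disks cover $E$, and the extensions agree on overlaps by the identity theorem, which gives the holomorphic extension to $\Omega$.

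\textbf{Main obstacle.} The difficult point is Step 1: quantitatively controlling the hyperbolic geometry of $\Delta\setminus E$ near a polar set, so as to obtain arbitrarily short generators of the fundamental group. If this cannot be done cleanly, I would instead argue with extremal length. Capacity zero means the family of curves joining $E$ to $\partial\Delta$ has infinite extremal length, or equivalently that the Green's function of $\Delta\setminus E$ degenerates. Since $\Phi$ contracts hyperbolic metrics, this would force the cluster set of $\Phi$ at each point of $E$ to be a single point. Continuity of $\Phi$ at $E$ would then follow, and holomorphy would follow by applying Riemann-type removability in a coordinate chart of $X$.
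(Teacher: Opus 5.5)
\textbf{The gap is in Step~1.} The paper does not prove this lemma; it quotes Nishino's theorem, together with Suzuki's simpler proof and J\"arvi's Picard-type formulation. Your Step~2 is fine: once a single-valued lift $\Delta\setminus E\to\D$ exists, removability of polar sets for bounded holomorphic functions and the maximum principle finish the argument. Step~1 carries essentially all of the content, and the mechanism you propose there fails for general polar sets.

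\textbf{A counterexample to the short-loop claim.} Take $E=\{0\}\cup\{1/k:k\geq 1\}$, which is countable and hence has $\caplog E=0$. Let $\Delta$ be any disk about $0$ of radius less than $1$ with $\partial\Delta\cap E=\varnothing$. Suppose a Jordan curve $\beta\subset\Delta\setminus E$ winds around $0$, and let $k$ be the largest index with $1/k$ outside $\beta$ (one exists, since $1\notin\Delta$). Then $\beta$ separates $\{0,1/(k+1)\}$ from $\{1/k,\infty\}$, and $z\mapsto(k+1)z-1$ carries these pairs to $\{-1,0\}$ and $\{1/k,\infty\}$. By Teichm\"uller's extremal problem, every annulus in $\C\setminus E$ with core $\beta$ has modulus at most that of $\Chat\setminus([-1,0]\cup[1,\infty])$, independently of $k$ and $\Delta$.

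The collar lemma then gives $\varepsilon_0>0$ such that every closed curve in $\Delta\setminus E$ of hyperbolic length less than $\varepsilon_0$ is of one of three kinds:
\begin{enumerate}
\item null-homotopic;
\item freely homotopic to a power of a small loop around some $1/k$;
\item freely homotopic to a power of a simple closed geodesic whose collar has too large a modulus for it to wind around $0$.
\end{enumerate}
All such curves have winding number $0$ about $0$. Winding number about $0$ is a surjective homomorphism $\pi_1(\Delta\setminus E)\to\mathbb{Z}$, so short loops never generate $\pi_1(\Delta\setminus E)$, however much you shrink $\Delta$. For a genus-two $X$ with systole below $\varepsilon_0$, your argument therefore says nothing about the monodromy around $0$.

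The lemma is of course true for this $E$: remove the isolated points first, then $0$. The point is that short loops cannot reach curves enclosing an accumulation point of $E$ together with a tail of $E$, and your proposal has no mechanism for those.

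\textbf{The fallback does not close the gap.} Infinite extremal length of the curves joining $E$ to $\partial\Delta$ is a statement about metrics on the domain. To push it through $\Phi$ you need an a priori bound on the area $\int\Phi^{*}(dA_X)$ near $E$. Schwarz--Pick only bounds this by the hyperbolic area of $\Delta\setminus E$, which is infinite near $0$ in the example above, since infinitely many cusps accumulate there.

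With such an area bound you could in fact repair Step~1. A length--area argument on the level curves of an Evans potential for $E$ would produce level sets whose entire $\Phi$-images are shorter than the systole of $X$. The regions between level sets are finitely connected, so their inner boundary curves generate the relevant fundamental groups. But obtaining that bound, or killing the monodromy some other way, is where the real work of Nishino's theorem lies, and nothing in your sketch supplies it. As written, ``the cluster set at each point of $E$ is a single point'' restates the conclusion rather than following from the contraction property.
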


Lastly, we require the following hyperbolic rigidity theorem \cite[Thm.~2.6.3]{Jost}.

\begin{lemma}\label{Lemma:HyperbolicRigidity}
Let $X$ be a compact Riemann surface of genus at least two.  Every holomorphic map
$\Phi:\C\to  X$ is constant.
\end{lemma}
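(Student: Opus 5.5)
The plan is to lift $\Phi$ to the universal cover of $X$ and then apply Liouville's theorem. We recorded above that the universal covering surface of a compact Riemann surface of genus at least two is conformally equivalent to $\D$. So fix a holomorphic covering map $\pi:\D\to X$. Given a holomorphic map $\Phi:\C\to X$, I would first produce a continuous map $\widetilde{\Phi}:\C\to\D$ with $\pi\circ\widetilde{\Phi}=\Phi$.

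The lift exists by the lifting criterion for covering spaces. The plane $\C$ is simply connected and locally path connected, so the condition $\Phi_*(\pi_1(\C))\subseteq\pi_*(\pi_1(\D))$ holds trivially, since the left side is trivial. Fix $z_0\in\C$ and a preimage $w_0\in\pi^{-1}(\Phi(z_0))$. Then there is a unique continuous lift $\widetilde{\Phi}$ with $\widetilde{\Phi}(z_0)=w_0$.

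Next I would check that $\widetilde{\Phi}$ is holomorphic. A holomorphic covering map is a local biholomorphism. Near any point $z\in\C$, choose an evenly covered neighborhood $V$ of $\Phi(z)$ and the sheet $U\ni\widetilde{\Phi}(z)$ over $V$. By continuity, $\widetilde{\Phi}$ maps a small neighborhood of $z$ into $U$, and on that neighborhood
\[
\widetilde{\Phi}=(\pi|_U)^{-1}\circ\Phi .
\]
This is a composition of holomorphic maps, so $\widetilde{\Phi}$ is holomorphic near $z$.

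Now $\widetilde{\Phi}$ is a bounded entire function, since it takes values in $\D$. By Liouville's theorem it is constant, and therefore $\Phi=\pi\circ\widetilde{\Phi}$ is constant.

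The substantive input is the uniformization statement that the universal cover of $X$ is $\D$ rather than $\C$ or $\Chat$, which we have already cited. The only point needing care is the holomorphy of the lift, handled above. For an alternative that avoids uniformization, one could instead use the complete hyperbolic metric of curvature $-1$ on $X$ and the Ahlfors--Schwarz lemma. I would present the covering-space argument, since it is shorter.
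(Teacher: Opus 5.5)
Your proof is correct and follows the same route as the paper: lift $\Phi$ through the universal covering $\D\to X$ using the simple connectivity of $\C$, then apply Liouville's theorem to the bounded entire lift. Your check that the continuous lift is holomorphic, via local inverses of the covering map, supplies a detail the paper asserts without proof.
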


The proof is simple: the universal covering surface of $X$ is $\D$.  Since $\C$ is simply connected, $\Phi$ lifts to a holomorphic 
map $\widetilde{\Phi}:\C\to \D$.  The lift is a bounded entire function, so Liouville's theorem ensures that it is constant.  Thus, $\Phi$ is constant.  

\section{Proof of Theorem \ref{Theorem:Main}}\label{Section:Proof}
Let $\theta$ be an inner function and define $E=\sigma(\theta)\cap\T$.
Suppose that $f,g,h\in K_{\theta}^p$ satisfy $f^n+g^n=h^n$
for some $n \geq 4$ and that $\caplog E=0$.  We must prove that
$\dim\Span\{f,g,h\}\leq1$.  If $f=g=h=0$, there is nothing to prove,
so we assume otherwise.
We identify functions in $K_{\theta}^p$ with their a.e.~defined boundary values on $\T$.
The map $f\mapsto \overline{fz}\theta$ is a conjugation on $K_{\theta}^p$: it is 
conjugate-linear, isometric, and involutive \cite{CBSTK}.  Thus, there are $\widetilde{f},\widetilde{g},\widetilde{h} \in K_{\theta}^p$ such that
\begin{equation}\label{eq:fgh}
f=\theta\overline{z\widetilde{f}}, \qquad
g=\theta\overline{z\widetilde{g}}, \quad\text{and}\quad
h=\theta\overline{z\widetilde{h}}
\end{equation}
a.e.~on $\T$. Since $f,g,h$ satisfy the Fermat equation, we get
\begin{equation}
(\overline{z}\theta)^n
\overline{ \big( (\widetilde{f})^n + (\widetilde{g})^n - (\widetilde{h})^n \big)} = 0
\end{equation}
a.e.~on $\T$ and hence (by uniqueness of boundary values for Smirnov functions)
\begin{equation}\label{eq:ConjugateFermat}
(\widetilde{f})^n + (\widetilde{g})^n = (\widetilde{h})^n
\end{equation}
on $\D$.  That is, the conjugate triple also satisfies the Fermat equation.

If $q$ is holomorphic on $\D$, then its \emph{reflection} through $\T$ is the holomorphic function
\begin{equation}
q^\#(z) = \overline{q(1/\overline z)}
\end{equation}
on $\De = \{z:|z|>1\} \cup \{\infty\}$.  
Reflect \eqref{eq:ConjugateFermat} and deduce that
\begin{equation}
(\widetilde{f}^\#)^n + (\widetilde{g}^\#)^n = (\widetilde{h}^\#)^n
\end{equation}
on $\De$.  Therefore,
$\Phi_{\mathrm{e}} = [\widetilde{f}^\#:\widetilde{g}^\#:\widetilde{h}^\#] : \De \to X_n$
is holomorphic after one removes common zeros projectively by Lemma \ref{Lemma:Removable}. 
Similarly, $\Phi_{\mathrm{i}}=[f:g:h]:\D\to X_n$ is holomorphic after removing common zeros.

Let $I \subseteq \T\setminus E$ be an open arc.  Then $\theta$
extends holomorphically and without zeros across $I$ \cite[Theorem 7.3.4.i \& Prop.~7.3.8]{IMSTO}.  
Then \eqref{eq:fgh} ensures that 
\begin{equation}
f=\frac{\theta}{z}\widetilde{f}^\#,\qquad
g=\frac{\theta}{z}\widetilde{g}^\#,\quad\text{and}\quad
h=\frac{\theta}{z}\widetilde{h}^\#
\end{equation}
on a neighborhood of $I$.  Since $\theta/z$ is nonvanishing on $I$, we get
\begin{equation*}
[f:g:h] = [\widetilde{f}^\#:\widetilde{g}^\#:\widetilde{h}^\#]
\end{equation*}
there. Thus, $\Phi_{\mathrm{i}}$ and $\Phi_{\mathrm{e}}$ agree across every
open arc of $\T\setminus E$ and hence they glue to a holomorphic map from $\Chat\setminus E$ to $X_n$.
We consider the restricted map $\Phi:\C\setminus E\to X_n$.

Since $n \geq 4$, \eqref{eq:FermatGenus} ensures that $g(X_n) \geq 2$.
Since $\caplog E=0$, Lemma \ref{Lemma:Extension} provides a holomorphic extension
$\widetilde{\Phi}:\C\to X_n$ of $\Phi$ across $E$.
Then Lemma \ref{Lemma:HyperbolicRigidity} ensures that $\widetilde{\Phi}$ is constant.
Thus, the holomorphic projective map determined by $f,g,h$ is constant,
say with value $[\alpha:\beta:\gamma]\in X_n$.
If $\gamma \neq 0$, then
$f/h = \alpha / \gamma$ and $g/h = \beta/\gamma$ wherever
$h$ does not vanish, so the Identity Theorem ensures that $f = (\alpha/\gamma)h$ and $g = (\beta/\gamma)h$
on $\D$, so $\dim \Span\{f,g,h\} \leq 1$.  The other two cases are similar. \qed

\bibliography{FLTSIS}

\providecommand{\bysame}{\leavevmode\hbox to3em{\hrulefill}\thinspace}
\providecommand{\MR}{\relax\ifhmode\unskip\space\fi MR }
\providecommand{\MRhref}[2]{%
  \href{http://www.ams.org/mathscinet-getitem?mr=#1}{#2}
}
\providecommand{\href}[2]{#2}
\begin{thebibliography}{10}

\bibitem{Baker1966}
I.~N. Baker, \emph{On a class of meromorphic functions}, Proc. Amer. Math. Soc.
  \textbf{17} (1966), 819--822. \MR{197732}

\bibitem{CimaRoss}
Joseph~A. Cima and William~T. Ross, \emph{The backward shift on the {H}ardy
  space}, Mathematical Surveys and Monographs, vol.~79, American Mathematical
  Society, Providence, RI, 2000. \MR{1761913}

\bibitem{Donaldson}
Simon Donaldson, \emph{Riemann surfaces}, Oxford Graduate Texts in Mathematics,
  vol.~22, Oxford University Press, Oxford, 2011. \MR{2856237}

\bibitem{DSS}
R.~G. Douglas, H.~S. Shapiro, and A.~L. Shields, \emph{Cyclic vectors and
  invariant subspaces for the backward shift operator}, Ann. Inst. Fourier
  (Grenoble) \textbf{20} (1970), no.~fasc. 1, 37--76. \MR{270196}

\bibitem{Duren}
Peter~L. Duren, \emph{Theory of {$H^{p}$} spaces}, Pure and Applied
  Mathematics, Vol. 38, Academic Press, New York-London, 1970. \MR{268655}

\bibitem{Dyakonov}
Konstantin~M. Dyakonov, \emph{Two problems on coinvariant subspaces of the
  shift operator}, Integral Equations Operator Theory \textbf{78} (2014),
  no.~2, 151--154. \MR{3157976}

\bibitem{Farkas}
H.~M. Farkas and I.~Kra, \emph{Riemann surfaces}, second ed., Graduate Texts in
  Mathematics, vol.~71, Springer-Verlag, New York, 1992. \MR{1139765}

\bibitem{Forster}
Otto Forster, \emph{Lectures on {R}iemann surfaces}, Graduate Texts in
  Mathematics, vol.~81, Springer-Verlag, New York, 1991, Translated from the
  1977 German original by Bruce Gilligan, Reprint of the 1981 English
  translation. \MR{1185074}

\bibitem{Fulton}
William Fulton, \emph{Algebraic curves}, Advanced Book Classics, Addison-Wesley
  Publishing Company, Advanced Book Program, Redwood City, CA, 1989, An
  introduction to algebraic geometry, Notes written with the collaboration of
  Richard Weiss, Reprint of 1969 original. \MR{1042981}

\bibitem{Gamelin}
Theodore~W. Gamelin, \emph{Complex analysis}, Undergraduate Texts in
  Mathematics, Springer-Verlag, New York, 2001. \MR{1830078}

\bibitem{CBSTK}
Stephan~Ramon Garcia, \emph{Conjugation, the backward shift, and {T}oeplitz
  kernels}, J. Operator Theory \textbf{54} (2005), no.~2, 239--250.
  \MR{2186351}

\bibitem{MS}
\bysame, \emph{Model spaces}, Lectures on analytic function spaces and their
  applications, Fields Inst. Monogr., vol.~39, Springer, Cham, [2023]
  \copyright 2023, pp.~121--154. \MR{4676335}

\bibitem{SAHS}
Stephan~Ramon Garcia and Bob Lutz, \emph{A supercharacter approach to
  {H}eilbronn sums}, J. Number Theory \textbf{186} (2018), 1--15. \MR{3758203}

\bibitem{IMSTO}
Stephan~Ramon Garcia, Javad Mashreghi, and William~T. Ross, \emph{Introduction
  to model spaces and their operators}, Cambridge Studies in Advanced
  Mathematics, vol. 148, Cambridge University Press, Cambridge, 2016.
  \MR{3526203}

\bibitem{FBPAS}
\bysame, \emph{Finite {B}laschke products: a survey}, Harmonic analysis,
  function theory, operator theory, and their applications, Theta Ser. Adv.
  Math., vol.~19, Theta, Bucharest, 2017, pp.~133--158. \MR{3753897}

\bibitem{FBP}
\bysame, \emph{Finite {B}laschke products and their connections}, Springer,
  Cham, 2018. \MR{3793610}

\bibitem{MSAS}
Stephan~Ramon Garcia and William~T. Ross, \emph{Model spaces: a survey},
  Invariant subspaces of the shift operator, Contemp. Math., vol. 638, Amer.
  Math. Soc., Providence, RI, 2015, pp.~197--245. \MR{3309355}

\bibitem{K6M}
Stephan~Ramon Garcia and George Todd, \emph{Supercharacters, elliptic curves,
  and the sixth moment of {K}loosterman sums}, J. Number Theory \textbf{202}
  (2019), 316--331. \MR{3958076}

\bibitem{Garnett}
John~B. Garnett, \emph{Bounded analytic functions}, first ed., Graduate Texts
  in Mathematics, vol. 236, Springer, New York, 2007. \MR{2261424}

\bibitem{Gross1966}
Fred Gross, \emph{On the equation {$f^{n}+g^{n}=1$}}, Bull. Amer. Math. Soc.
  \textbf{72} (1966), 86--88. \MR{185125}

\bibitem{GrossFunctional1966}
\bysame, \emph{On the functional equation {$f^{n}+g^{n}=h^{n}$}}, Amer. Math.
  Monthly \textbf{73} (1966), 1093--1096. \MR{204655}

\bibitem{Gross1968}
\bysame, \emph{On the equation {$f^{n}+g^{n}=1$}. {II}}, Bull. Amer. Math. Soc.
  \textbf{74} (1968), 647--648. \MR{227406}

\bibitem{Jarvi}
Pentti J\"{a}rvi, \emph{Generalizations of {P}icard's theorem for {R}iemann
  surfaces}, Trans. Amer. Math. Soc. \textbf{323} (1991), no.~2, 749--763.
  \MR{1030508}

\bibitem{Jost}
J\"{u}rgen Jost, \emph{Compact {R}iemann surfaces}, third ed., Universitext,
  Springer-Verlag, Berlin, 2006, An introduction to contemporary mathematics.
  \MR{2247485}

\bibitem{Koosis}
Paul Koosis, \emph{Introduction to {$H_p$} spaces}, second ed., Cambridge
  Tracts in Mathematics, vol. 115, Cambridge University Press, Cambridge, 1998,
  With two appendices by V. P. Havin [Viktor Petrovich Khavin]. \MR{1669574}

\bibitem{Mason}
R.~C. Mason, \emph{Diophantine equations over function fields}, London
  Mathematical Society Lecture Note Series, vol.~96, Cambridge University
  Press, Cambridge, 1984. \MR{754559}

\bibitem{Miranda}
Rick Miranda, \emph{Algebraic curves and {R}iemann surfaces}, Graduate Studies
  in Mathematics, vol.~5, American Mathematical Society, Providence, RI, 1995.
  \MR{1326604}

\bibitem{Nikolski}
N.~K. Nikolski\u{\i}, \emph{Treatise on the shift operator}, Grundlehren der
  mathematischen Wissenschaften [Fundamental Principles of Mathematical
  Sciences], vol. 273, Springer-Verlag, Berlin, 1986, Spectral function theory,
  With an appendix by S. V. Hru\v{s}\v{c}ev [S. V. Khrushch\"{e}v] and V. V.
  Peller, Translated from the Russian by Jaak Peetre. \MR{827223}

\bibitem{Nishino}
Toshio Nishino, \emph{Prolongements analytiques au sens de {R}iemann}, Bull.
  Soc. Math. France \textbf{107} (1979), no.~1, 97--112. \MR{532563}

\bibitem{Ransford}
Thomas Ransford, \emph{Potential theory in the complex plane}, London
  Mathematical Society Student Texts, vol.~28, Cambridge University Press,
  Cambridge, 1995. \MR{1334766}

\bibitem{SaffSurvey}
E.~B. Saff, \emph{Logarithmic potential theory with applications to
  approximation theory}, Surv. Approx. Theory \textbf{5} (2010), 165--200.
  \MR{2734198}

\bibitem{SaffTotik}
Edward~B. Saff and Vilmos Totik, \emph{Logarithmic potentials with external
  fields}, Grundlehren der mathematischen Wissenschaften [Fundamental
  Principles of Mathematical Sciences], vol. 316, Springer, Cham, [2024]
  \copyright 2024, Second edition [of 1485778], With an appendix by Thomas
  Bloom. \MR{4807484}

\bibitem{Stothers}
W.~W. Stothers, \emph{Polynomial identities and {H}auptmoduln}, Quart. J. Math.
  Oxford Ser. (2) \textbf{32} (1981), no.~127, 349--370. \MR{625647}

\bibitem{Suzuki}
Masakazu Suzuki, \emph{Comportement des applications holomorphes autour d'un
  ensemble polaire}, C. R. Acad. Sci. Paris S\'{e}r. I Math. \textbf{304}
  (1987), no.~8, 191--194. \MR{880924}

\end{thebibliography}
\bibliographystyle{amsplain}

\end{document}